\renewcommand{\email}[2][]{%
  \ifx\emails\@empty\relax\else{\g@addto@macro\emails{,\space}}\fi%
  \@ifnotempty{#1}{\g@addto@macro\emails{\textrm{(#1)}\space}}%
  \g@addto@macro\emails{#2}%
}
\newtheorem{theorem}{Theorem}
\newtheorem{lemma}{Lemma}
\newtheorem{corollary}{Corollary}
\newtheorem{proposition}{Proposition}
\begin{document}

\title{Special solitons on $3$-manifolds}

\author{Nasrin Malekzadeh$^{ 1}$ and  Esmaiel Abedi$^{2}$}
\address{$^{1}$, $^{2}$ Department of Mathematics, Azarbaijan Shahid Madani University, Tabriz 53751 71379, I. R. Iran } 
\email{n.malekzadeh@azaruniv.edu (Corresponding author)}

\email{esabedi@azaruniv.ac.ir}

 \keywords{semi-symmetric spaces, pseudo-symmetric spaces, gradient Ricci solitons, gradient Yamabe solitons.} 

\subjclass[2010]{53C25, 53C35}

\maketitle

%%%%% Begin Abstract %%%%%%%%%%%
\begin{abstract}
In this paper we study solitons on $3$-dimensional manifolds. In particular, we show that $3$-dimensional pseudo-symmetric gradient Ricci solitons and nontrivial gradient Yamabe solitons are locally isometric to either $\mathbb{R}^3$, $\mathbb{S}^3$, $\mathbb{H}^3$, $\mathbb{R}\times\mathbb{S}^2$ or $\mathbb{R}\times\mathbb{H}^2$. 
\end{abstract} 
%%%%% end %%%%%%%%%%%

%%%%% Keywords %%%%%%%%%%%

%%%% maketitle %%%%% 

%%%% Start %%%%%%  
\section{Introduction}
The Ricci flow was introduced by Hamilton \cite{43} to study compact three-manifolds with positive Ricci curvature. The Ricci solitons, as a generalization of the Einstein metrics, are self-similar solutions of Hamilton's Ricci flow. 

A Riemannian manifold $(M, g)$ is said to be a Ricci soliton if there exist $\lambda\in \mathbb{R}$ and $V\in \mathfrak{X}(M)$ such that 
\begin{equation}\label{020}
\dfrac{1}{2}\mathcal{L}_V g+Ric=\lambda g.
\end{equation}
The Ricci soliton is said to be expanding, steady or shrinking according as $\lambda > 0$, $\lambda=0$ or $\lambda< 0$, respectively. If the vector field $V$ is the gradient of a smooth function $f$, i.e., $V=\nabla f$ then $g$ is called a gradient Ricci soliton and the function $f$ is called the potential function. In this case equation \eqref{020} reduces to
\begin{equation}\label{021} 
Hess f+Ric=\lambda g,       
\end{equation}
where $Hessf = \nabla^2f$ \cite{180}.
 
The equation \eqref{021} links geometric information about the curvature of the manifold through the Ricci tensor and the geometry of the level sets of the potential function by means of their second fundamental form. For background on Ricci solitons, one can refer to \cite{60,61,62} and references therein.
 
Gradient Ricci solitons have been studied more extensively in last decade.
Petersen et al. \cite{64} and Fern$\acute{a}$ndez-L$\acute{o}$pez et al. \cite{63} studied steady and non-steady gradient Ricci solitons with constant scalar curvature.

Hamilton classified $2$-dimensional shrinking gradient Ricci solitons with bounded curvature \cite{81}. 
In dimension $3$, Ivey proved compact shrinking gradient solitons have constant positive curvature \cite{82}. Noncompact case was classified by Perelman \cite{71}. 
It is shown that if $M^3$ is a shrinking Ricci soliton with bounded curvature, then it has non-negative sectional curvature \cite{62}.
Recently Ni et al. \cite{72} and Naber \cite{74} have given an alternative approach for $3$-dimensional shrinking Ricci solitons. Four dimensional complete noncompact shrinking gradient Ricci soliton were studied by Naber \cite{74}. 
   
On a Riemannian manifold $(M, g)$, the Yamabe flow is defined by     
\begin{equation*}
\dfrac{\partial}{\partial t}g_{ij}=-rg_{ij},  
\end{equation*}
where $r$ is the scalar curvature of $M$. R.S. Hamilton \cite{81} introduced Yamabe flow to solve the Yamabe conjecture, saying that any metric is conformally equivalent to a metric with constant scalar curvature. The Yamabe solitons are special solutions of the Yamabe flow.
%and naturally arise as limits of dilations of singularities in the Yamabe flow. 
A Riemannian manifold $(M, g)$ is said to be a Yamabe soliton if $g$ satisfies the equation
\begin{equation}\label{9.9}  
\mathcal{L}_X g = (\lambda - r)g,   
\end{equation}
where $X$ is a smooth vector field on $M$, $\mathcal{L}_X$ is the Lie derivative along $X$ and $\lambda$ is a real constant. Similarly, a Yamabe soliton is said to be shrinking, steady or expanding if $\lambda < 0$, $\lambda = 0$ or $\lambda > 0$, respectively. If the vector field $X$ is the gradient of a potential function $f$, then $(M, g)$ is said to be a gradient Yamabe soliton and equation \eqref{9.9} reduces to 
\begin{equation}\label{9.7} 
g(\nabla_X\nabla f, Y) = (\lambda -r)g(X, Y).
\end{equation} 
If the potential function $f$ be non-constant, then $M$ is called a nontrivial gradient Yamabe soliton.

Symmetric spaces play a prominent role in differential geometry. Cartan initiated the study of Riemannian symmetric spaces and he introduced the notice of locally symmetric space, that is a Riemannian manifold for which the Riemannian curvature tensor $R$ is parallel \cite{1}. Levy showed that \cite{m} in these spaces, the sectional curvature of every plane remains invariant under parallel transport of the plane along any curve. Ferus started the study of their extrinsic analog, called symmetric or parallel submanifolds and classified all such submanifolds in Euclidean spaces \cite{206}. Semi-symmetric spaces, as a direct generalization of locally symmetric spaces, are the Riemannian manifolds that satisfy the condition $R(X,Y).R=0$, where $X, Y \in \mathfrak{X}(M)$ and $R(X, Y)$ acts as a derivation on $R$. Haesen et al. proved that in these spaces, the sectional curvature of every plane is invariant under parallel transport around any infinitesimal coordinate parallelogram \cite{l}. The classification of semi-symmetric manifolds was described by Szab$\acute{o}$ \cite{27,13}. Thereafter, Deprez studied their extrinsic analog, called semi-symmetric or semi-parallel submanifolds and he obtained the classification of these hypersurfaces and surfaces in Euclidean spaces \cite{201}.

Deszcz generalized the concept of semi-symmetry and introduced pseudo-symmetric manifolds during the study of totally umbilical submanifolds of semi-symmetric manifolds \cite{00,6.0,6.00}. For more details and examples of pseudo-symmetric manifolds, one can refer to \cite{b,6}.
$3$-dimensional pseudo-symmetric manifolds of constant type have been studied by several authors beginning by Kowalski and  Sekizawa \cite{f,g,9,i}. Hashimoto et al. \cite{e} and Calvaruso \cite{d} classified conformally flat pseudo-symmetric spaces of constant type for different dimensions. Cho et al. studied pseudo-symmetric contact homogeneous, quasi-Sasakians, generalized $(\kappa, \mu)$-spaces and trans-Sasakians $3$-manifolds \cite{c}. Three-dimensional semi-symmetric gradient Ricci soliton manifolds were studied by Cho and  Park \cite{30}. They proved that every $3$-dimensional semi-symmetric manifold $M$ admitting a gradient Ricci soliton is locally isometric to either $\mathbb{R}^3$, $\mathbb{S}^3$, $\mathbb{H}^3$, $\mathbb{R}\times\mathbb{S}^2$ or $\mathbb{R}\times\mathbb{H}^2$. 
  
In this paper, we study $3$-dimensional pseudo-symmetric manifolds that the metrics on them are special solitons and we classify these manifolds by proving the following Theorems.
\begin{theorem}\label{th1}  
Let $3$-dimensional Riemannian manifold $(M, g)$ is a gradient Ricci soliton. If $(M, g)$ is pseudo-symmetric then it is locally isometric to 
either $\mathbb{R}^3$, $\mathbb{S}^3$, $\mathbb{H}^3$, $\mathbb{R}\times\mathbb{S}^2$ or $\mathbb{R}\times\mathbb{H}^2$. 
\end{theorem}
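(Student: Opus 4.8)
The plan is to reduce Theorem~\ref{th1} to the semi-symmetric case settled by Cho and Park \cite{30}, using that in dimension three pseudo-symmetry is a pointwise algebraic condition on the Ricci tensor: a $3$-dimensional Riemannian manifold is pseudo-symmetric if and only if at each point the Ricci operator has at most two distinct eigenvalues (Kowalski--Sekizawa, Calvaruso, Deszcz; see \cite{9,d}). Accordingly I would decompose $M$ into the open set $U_{0}$ where $Ric$ is proportional to $g$ and the open set $U$ where it has exactly two distinct eigenvalues. On the interior of $U_{0}$ the soliton is Einstein, hence of constant sectional curvature since $\dim M = 3$, so it is locally $\mathbb{R}^{3}$, $\mathbb{S}^{3}$ or $\mathbb{H}^{3}$; by real-analyticity of Ricci soliton metrics together with connectedness of $M$ one may analyze $U$ separately and then patch the local conclusions. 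So the whole content lies over $U$.

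Over $U$ I would write $Ric = \rho\,g + (\mu-\rho)\,\omega\otimes\omega$, with $\rho\neq\mu$ and $\omega$ a local unit covector spanning the simple eigenline $D_{\mu}$, and set $D_{\rho}=\ker\omega$ for the $2$-dimensional eigendistribution. Choosing an orthonormal frame $\{e_{1},e_{2},e_{3}\}$ with $e^{1}=\omega$, equation \eqref{021} shows that $Hess\,f$ is diagonalized by the same frame, with eigenvalue $\lambda-\mu$ along $e_{1}$ and $\lambda-\rho$ along $e_{2},e_{3}$. I would then feed this into the standard gradient-soliton relations --- $\Delta f = 3\lambda - r$, constancy of $r + |\nabla f|^{2} - 2\lambda f$, and $Ric(\nabla f) = \tfrac12\nabla r$, which expresses the components of $\nabla f$ through $e_{i}r$, $\rho$ and $\mu$ --- and into the commutation identity $(\nabla_{X}Ric)(Y,Z) - (\nabla_{Y}Ric)(X,Z) = R(X,Y,Z,\nabla f)$ valid on any gradient Ricci soliton. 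Since in dimension three $R$ is algebraically determined by $Ric$, these collapse to a closed first-order system for the connection coefficients of the adapted frame and for $\nabla\rho,\nabla\mu$. The key fact I would try to extract is that $\nabla f$ is a Ricci eigenvector: $\nabla f \in D_{\rho}$ or $\nabla f \in D_{\mu}$ wherever $\nabla f\neq 0$.

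Granting that, I would split into the two cases. If $\nabla f\in D_{\mu}$, the level sets of $f$ are exactly the leaves of the (now integrable) distribution $D_{\rho}$, they are totally umbilic because $Hess\,f$ is diagonal, and the Gauss equation computes their intrinsic curvature in terms of $\rho$; demanding that the two-eigenvalue structure persist on neighbouring level sets and running the radial ODEs coming from \eqref{021} forces $\rho$, hence $r = 2\rho+\mu$, to be constant. The case $\nabla f\in D_{\rho}$ is handled symmetrically --- now $e_{1}$ annihilates $f$, $r$, $\rho$ and $\mu$ --- and again yields constant scalar curvature. Once $r$ is constant one has $Ric(\nabla f)=0$, so the Ricci eigenvalues must take the form $\{c,c,c\}$ or $\{0,\mu,\mu\}$; hence $(M,g)$ is semi-symmetric and the theorem of Cho and Park \cite{30} finishes the argument (alternatively one invokes the rigidity of gradient Ricci solitons of constant scalar curvature, cf.\ \cite{64,63}, which in dimension three gives exactly the five model spaces). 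I expect the eigenvector claim to be the main obstacle: without it the level-set foliation is not subordinate to the Ricci eigendecomposition and the off-diagonal Codazzi-type equations do not close. The remaining delicate points are controlling the zero set of $\nabla f$ and the interface where $U$ meets $U_{0}$, so that these local conclusions globalize to the stated form.
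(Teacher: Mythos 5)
Your overall strategy --- use the pointwise eigenvalue characterization of three-dimensional pseudo-symmetry to put $Ric$ in a two-eigenvalue normal form, feed it into the gradient soliton identities, and finish with Cho--Park \cite{30} --- is the same as the paper's, but the middle of your argument is a plan rather than a proof, and the two steps you lean on are exactly the ones that do not come for free. First, you never establish the ``key fact'' that $\nabla f$ is a Ricci eigenvector, and you yourself concede it is the main obstacle. The paper does not need this statement at all: it works with the function $L$ of Proposition \ref{1} (simple eigenvalue $2L$, double eigenvalue $\mu$), extracts component equations from the second Bianchi identity and from $R(X,Y)\nabla f=(\nabla_YS)X-(\nabla_XS)Y$ in an adapted frame, and proves two lemmas, namely that $L$ cannot be constant where $L\neq 0$ and that $\mu\neq0$ there. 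Second, your route to constant scalar curvature (``running the radial ODEs \dots forces $\rho$ to be constant'') is not carried out and is not how the contradiction is actually reached: once $\mu\neq0$ the Ricci tensor has a nonzero eigenvalue of multiplicity two, so the Petersen--Wylie splitting (Corollary \ref{..5}, \cite{69}) gives $N^{2}\times\mathbb{R}$, which forces a zero Ricci eigenvalue; since $\mu\neq0$ that zero must be $2L$, contradicting $L\neq0$.

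There is also a loophole in your endgame even if one grants the unproved claims. From constant $r$ and $Ric(\nabla f)=\tfrac12\nabla r=0$ you conclude the spectrum is $\{c,c,c\}$ or $\{0,\mu,\mu\}$ and hence that $g$ is semi-symmetric. But $Ric(\nabla f)=0$ only says that $0$ is an eigenvalue wherever $\nabla f\neq0$; it could be the \emph{double} eigenvalue, giving the spectrum $(0,0,2L)$ with $L\neq0$, which is pseudo-symmetric but not semi-symmetric (Sekigawa's condition $(\mu_i-\mu_j)(2(\mu_i+\mu_j)-r)=0$ fails for the pair $(0,2L)$). Excluding this is precisely the content of the paper's lemma that $\mu\neq0$, which requires its own computation with the structure equations; without it your reduction to \cite{30} does not close. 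The decomposition into the Einstein set and the two-eigenvalue set, and the appeal to analyticity for patching, are reasonable but are the easy part of the argument.
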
     
\begin{theorem}\label{th2}
Every $3$-dimensional nontrivial gradient Yamabe soliton pseudo-symmetric manifold is locally isometric to either $\mathbb{R}^3$, $\mathbb{S}^3$, $\mathbb{H}^3$, $\mathbb{R}\times\mathbb{S}^2$ or $\mathbb{R}\times\mathbb{H}^2$. 
\end{theorem}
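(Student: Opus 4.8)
The plan is to combine the rigidity coming from the gradient Yamabe soliton equation \eqref{9.7} with the eigenvalue restriction imposed by pseudo-symmetry. Since the soliton is nontrivial, $f$ is non-constant, so $\Omega=\{p\in M:\nabla f(p)\neq 0\}$ is a non-empty open set; as the conclusion is local it suffices to argue on $\Omega$. Writing $\varphi=\lambda-r$, equation \eqref{9.7} reads $\nabla^2 f=\varphi\, g$, whence $\Delta f=3\varphi$, $\nabla|\nabla f|^2=2\varphi\,\nabla f$ (so $|\nabla f|$ is constant on connected components of the level sets of $f$) and, after differentiating the soliton equation and commuting covariant derivatives,
\begin{equation*}
R(X,Y)\nabla f=(X\varphi)\,Y-(Y\varphi)\,X,\qquad Ric(\nabla f)=2\,\nabla r.
\end{equation*}
I would also use the classical fact that a three-dimensional Riemannian manifold is pseudo-symmetric if and only if at every point its Ricci operator has at most two distinct eigenvalues; since in dimension $3$ the full curvature tensor is algebraically determined by $Ric$, this pins down all sectional curvatures in any Ricci-eigenframe.

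After shrinking $\Omega$ so that the number of distinct Ricci eigenvalues and their multiplicities are constant, I would split into two cases. If the Ricci operator has a single eigenvalue, then $(M,g)$ is Einstein, hence of constant sectional curvature by Schur's lemma (dimension $\geq 3$), so $M$ is locally $\mathbb{R}^3$, $\mathbb{S}^3$ or $\mathbb{H}^3$. Otherwise the Ricci operator has exactly two eigenvalues: $\alpha$ of multiplicity $1$ with unit eigenvector $e_1$, and $\beta$ of multiplicity $2$ with eigenplane $\operatorname{span}\{e_2,e_3\}$, $\alpha\neq\beta$; in this frame the sectional curvatures are $K_{12}=K_{13}=\tfrac{\alpha}{2}$ and $K_{23}=\beta-\tfrac{\alpha}{2}$. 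Writing $\nabla f=a\,e_1+b\,e_2+c\,e_3$, substituting into $R(e_1,e_2)\nabla f=(e_1\varphi)e_2-(e_2\varphi)e_1$ and the analogous identity with $e_3$, and using $\nabla\varphi=-\tfrac12 Ric(\nabla f)$, I expect the $e_1$-components to force $b=c=0$: thus $\nabla f$ is the eigenvector of the simple eigenvalue $\alpha$, and $Ric(\nabla f)=\alpha|\nabla f|^2 e_1$, so $\nabla r=\tfrac12\alpha|\nabla f|^2 e_1$ is parallel to $\nabla f$ and $r,\alpha,\beta,\varphi$ are functions of $f$ alone along the flow of $e_1$.

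The heart of the matter is to show the simple eigenvalue vanishes, $\alpha\equiv 0$. Granting this, $Ric(\nabla f)=0$ forces $\nabla r\equiv 0$, so $r$ (hence $\beta=r/2$ and $\varphi$) is constant; then $R(X,Y)\nabla f\equiv 0$, together with $\nabla^2 f=\varphi\,g$ and $\varphi$ constant, imply that $g$ is locally a Riemannian product $\mathbb{R}\times N^2$ with $N^2$ of constant Gauss curvature $\beta$ (or is flat), which yields $\mathbb{R}^3$, $\mathbb{R}\times\mathbb{S}^2$ or $\mathbb{R}\times\mathbb{H}^2$ and completes the proof. To establish $\alpha\equiv 0$ I would insert the soliton structure into the pseudo-symmetry condition $R\cdot R=L_R\,Q(g,R)$ evaluated in the adapted frame and compare the variation of $r$ along the flow of $e_1$ — equal to $\tfrac12\alpha|\nabla f|^2$ by $Ric(\nabla f)=2\nabla r$ — with the Riccati/Jacobi equation governing the umbilic level sets of $f$, hoping to extract a first-order relation that forces $K_{12}=0$. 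I expect this last step to be the main obstacle: in contrast with the gradient Ricci soliton case (Theorem~\ref{th1}), equation \eqref{9.7} imposes no direct algebraic constraint on $Ric$, so the required rigidity must be squeezed out purely from the interaction of the umbilicity of the level sets with the pinching $K_{12}=K_{13}$ coming from pseudo-symmetry, and the signs in the Bianchi-type identities above need to be tracked carefully.
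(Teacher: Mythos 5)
Your setup is sound and your reduction is the right one: in dimension $3$ pseudo-symmetry is exactly the statement that the Ricci operator has an eigenvalue of multiplicity at least two (Proposition \ref{1}), the one-eigenvalue case gives constant curvature via Schur, and in the two-eigenvalue case the identity $R(X,Y)\nabla f=(X\varphi)Y-(Y\varphi)X$ together with $Ric(\nabla f)=2\nabla r$ does push $\nabla f$ into the simple eigendirection (the $e_1$-component gives $b\,(K_{12}-\tfrac12\beta)=\tfrac12 b(\alpha-\beta)=0$). But there is a genuine gap exactly where you flag it: you never establish $\alpha\equiv 0$; you only describe a hope that a Riccati/Jacobi analysis of the umbilic level sets combined with $R\cdot R=L\,Q(g,R)$ will produce it. Without that step the classification does not follow, and the proposed route is unlikely to close on its own: since Proposition \ref{1} says pseudo-symmetry is \emph{equivalent} to the eigenvalue multiplicity condition, the relation $R\cdot R=L\,Q(g,R)$ carries no further pointwise information to ``insert the soliton structure into''; what is needed is differential information.

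The paper closes this gap with a purely first-order argument whose ingredients you already have but do not combine. Besides $Ric(\nabla f)=\pm 2\nabla r$ one has the first integral $\nabla_i|\nabla f|^2=2(\lambda-r)\nabla_i f$ (the paper's \eqref{..1}). Writing both in the adapted frame with Ricci eigenvalues $(\mu,\mu,2L)$, so $r=2(\mu+L)$, and substituting the component equations of \eqref{9.7}, one obtains $f_i\bigl(\lambda-4(\mu+L)\bigr)=0$ for all $i$ (equation \eqref{5.12}). Nontriviality of $f$ forces $\lambda=4(\mu+L)$, i.e.\ $r$ is constant where $\nabla f\neq 0$; differentiating this and using \eqref{5.11} yields $f_0L=f_1\mu=f_2\mu=0$, and a short case analysis (using $\mu\neq 2L$ and the remaining components of \eqref{9.7}) shows $f$ would have to be constant, a contradiction. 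Hence $L\equiv 0$ (Proposition \ref{pro3}), and the conclusion follows from the semi-symmetric classification (Proposition \ref{pro2}), which also contains a connectedness argument --- absent from your sketch --- ruling out a mixture of the eigenvalue types $(\mu,\mu,\mu)$, $(\mu,\mu,0)$, $(0,0,0)$ on a connected open set. Finally, note that your sign $Ric(\nabla f)=2\nabla r$ is opposite to the paper's \eqref{6.5}, and your deduction $b=c=0$ depends on that sign, so the careful sign-tracking you mention is indeed mandatory.
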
  
%\begin{theorem}\label{th3}
%Let $(M, g)$ be a $3$-dimensional Riemannian manifold and the metric on it is a nontrivial gradient Yamabe soliton. %Then $M$ is  pseudosymmetric if and only if it is semi-symmetric. 
%\end{theorem}    
\section{Preliminaries}  
Let $(M, g)$ is a $3$-dimensional Riemannian manifold and $\nabla, \mathcal{R}$ and $S$ denote the Levi-Civita connection, the curvature operator and the Ricci operator of $M$, respectively. 

For a symmetric $(0, 2)$-tensor field $g$ on $M$ and $X, Y\in \mathfrak{X}(M)$, we define the endomorphisms $X\wedge_g Y$ and $R(X,Y)$ of $\mathfrak{X}(M)$ by
\begin{equation}\label{7.6} 
(X \wedge_g Y ) Z = g( Y, Z) X - g( X, Z) Y,~~~~~Z\in \mathfrak{X}(M), 
\end{equation}
\begin{equation*}
R(X,Y)=[\nabla_X, \nabla_Y]-\nabla_{[X, Y]}.
\end{equation*}
For a $(0, k)$-tensor field $T$, $k\geq 1$ and a $(0, 4)$-tensor $\mathcal{R}$, the $(0, k + 2)$ tensor fields $\mathcal{R} .T$ and $Q(g, T)$ are defined by \cite{0,6}
\begin{equation}\label{02} 
\begin{array}{ll} 
(\mathcal{R}.T)(X_1,...,X_k; X, Y) & =(R(X,Y).T)(X_1,...,X_k) \\
 & =-T(R(X,Y)X_1, X_2, ...,X_k)\\
 &-...-T(X_1, ...,X_{k-1},R(X,Y)X_k),
\end{array}
\end{equation}     
and  
\begin{equation}\label{3}
 \begin{array}{ll}
Q(g,T)(X_1, ...,X_k; X, Y)&=((X\wedge_g Y).T)(X_1, ..., X_k)\\
&=-T((X \wedge_g Y)X_1, X_2, ..., X_k)\\ 
&-... -T(X_1,...,X_{k-1},(X \wedge_g Y)X_k),
\end{array}
\end{equation} 
respectively. A Riemannian manifold $M$ is said to be pseudo-symmetric if the tensors $R.R$ and $Q(g, R)$ are linearly dependent at every point of $M$, i.e.,
\begin{equation}\label{4}
R.R = L Q(g, R),
\end{equation}   
where $L$ is a smooth function on  the set $U = \lbrace x\in M: R-\dfrac{r}{n(n-1)}G\neq 0 ~at~ x \rbrace $ and the $(0, 4)$-tensor $G$ is given by
\begin{equation*}
G(X_1, X_2, X, Y)= g((X_1\wedge X_2)X, Y)
\end{equation*}
\cite{6}.
This is equivalent to
\begin{equation}\label{5}
(R(X, Y ).R)(U, V, W) = L [((X \wedge_g Y ).R)(U, V, W)],
\end{equation}
holding on the set $U$.
The manifold $M$ is called pseudo-symmetric of constant type if $L$ is a constant. In particular, when $L=0$, $M$ is a semi-symmetric manifold.
\section{Pseudo-symmetric gradient Ricci solitons} 
Let $M^3$ be a pseudo-symmetric manifold which is also a gradient Ricci soliton with a potential function $f$. Let $\lbrace e_i \rbrace $ is a local orthonormal frame field on $M^3$ and
\begin{equation*} 
V=\nabla f = f_0e_0 + f_1e_1 + f_2e_2,  
\end{equation*}
where $f_i = g(\nabla f, e_i)$, $i = 0, 1, 2$. 
Since dim$M=3$, for all $X, Y, Z \in \mathfrak{X}(M)$ we have the formula
\begin{equation}\label{4.01}
R(X, Y ) = SX \wedge Y + X \wedge SY -\frac{r}{2}X \wedge Y. 
\end{equation}
\begin{proposition}\label{1}\cite{080,9} 
A Riemannian $3$-manifold of non-constant curvature is a pseudo-symmetric space with $R . R = L Q(g, R)$ if and only if the eigenvalues of the Ricci tensor locally satisfy the following relations (up to numeration):
\begin{equation*}  
 \mu_1 = \mu_2,~~~~~~ \mu_3 = 2L.  
\end{equation*}
\end{proposition}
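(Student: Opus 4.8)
The plan is to work pointwise on the set $U$ in a local orthonormal frame $\{e_1, e_2, e_3\}$ that diagonalizes the Ricci operator, $Se_i = \mu_i e_i$. Substituting these eigenrelations into the $3$-dimensional curvature formula \eqref{4.01} gives, for $a \neq b$,
\begin{equation*}
R(e_a, e_b) = \Big(\mu_a + \mu_b - \tfrac{r}{2}\Big)(e_a \wedge e_b) =: K_{ab}\,(e_a \wedge e_b),
\end{equation*}
so that in this frame each curvature endomorphism $R(e_a,e_b)$ is a \emph{scalar multiple} of the bivector endomorphism $e_a \wedge e_b$, the coefficient $K_{ab}$ being the sectional curvature of the plane spanned by $e_a, e_b$. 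Writing $r = \mu_1 + \mu_2 + \mu_3$, these read $K_{23} = \tfrac12(\mu_2 + \mu_3 - \mu_1)$ and cyclically.

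The key observation is that the derivation action $A \cdot R$ of an endomorphism $A$ on the curvature tensor is linear in $A$. Since $R(e_a, e_b) = K_{ab}(e_a \wedge e_b)$, evaluating the $(0,6)$-tensors of \eqref{02} and \eqref{3} with last two slots $e_a, e_b$ yields
\begin{equation*}
(R \cdot R)(\,\cdot\,; e_a, e_b) = K_{ab}\,\big((e_a \wedge e_b)\cdot R\big) = K_{ab}\, Q(g, R)(\,\cdot\,; e_a, e_b).
\end{equation*}
By multilinearity in the last two arguments, the defining relation \eqref{5} therefore holds if and only if $(K_{ab} - L)\,Q(g,R)(\,\cdot\,; e_a, e_b) = 0$ for every pair $a \neq b$; that is, for each pair either $K_{ab} = L$ or $Q(g,R)(\,\cdot\,; e_a, e_b)$ vanishes identically.

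Next I would determine when $Q(g,R)(\,\cdot\,; e_a, e_b) \equiv 0$. A short computation using $R_{ijkl} = K_{ij}(\delta_{jk}\delta_{il} - \delta_{ik}\delta_{jl})$ shows, e.g.\ for $(e_1, e_2)$, that the nonvanishing components are all proportional to $K_{23} - K_{31}$ (invariantly, $e_1 \wedge e_2$ generates an infinitesimal rotation fixing the $2$-form $e_1 \wedge e_2$ and mixing $e_2 \wedge e_3$ with $e_3 \wedge e_1$, so $R$ is invariant iff the curvature operator is scalar on that block). Hence $Q(g,R)(\,\cdot\,; e_1,e_2)\equiv 0 \iff K_{23}=K_{31}$, and cyclically. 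Setting $\kappa_1 = K_{23},\ \kappa_2 = K_{31},\ \kappa_3 = K_{12}$, pseudo-symmetry is thus equivalent to: for each $i$, either $\kappa_i = L$ or the other two $\kappa$'s coincide.

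Finally I would run the case analysis. If the three $\kappa_i$ were pairwise distinct, each alternative would force $\kappa_i = L$, contradicting distinctness; since non-constant curvature excludes $\kappa_1 = \kappa_2 = \kappa_3$, exactly two coincide, say $\kappa_1 = \kappa_2 \neq \kappa_3$. The two remaining alternatives then force $\kappa_1 = \kappa_2 = L$. Translating back through the linear relations above, $\kappa_1 = \kappa_2$ is equivalent to $\mu_1 = \mu_2$, and then $\kappa_1 = \tfrac12\mu_3 = L$ gives $\mu_3 = 2L$; the converse is a direct verification. I expect the main obstacle to be the vanishing analysis of $Q(g,R)(\,\cdot\,; e_a, e_b)$—organizing its components cleanly (the $2$-form/curvature-operator viewpoint makes this transparent and avoids a long index computation)—together with keeping the ``up to numeration'' bookkeeping consistent across the three pairs.
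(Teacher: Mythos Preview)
The paper does not supply a proof of this proposition: it is quoted verbatim with the citation \cite{080,9} and used as a tool, so there is no ``paper's own proof'' to compare against. Your argument is nonetheless correct and self-contained. The key steps---diagonalising the Ricci operator, using \eqref{4.01} to write $R(e_a,e_b)=K_{ab}(e_a\wedge e_b)$, reducing \eqref{5} to $(K_{ab}-L)\,\big((e_a\wedge e_b)\cdot R\big)=0$ for each basis pair, identifying $(e_a\wedge e_b)\cdot R\equiv 0$ with equality of the two complementary sectional curvatures, and then the three-case elimination---are all sound; the translation $\kappa_1=\kappa_2\Leftrightarrow\mu_1=\mu_2$ and $\kappa_1=\tfrac12\mu_3$ when $\mu_1=\mu_2$ is exactly right. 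One small point worth making explicit in a final write-up: the hypothesis ``non-constant curvature'' is being used pointwise on $U$ to exclude $\kappa_1=\kappa_2=\kappa_3$, which is legitimate since at any point of $U$ the curvature is not a multiple of $G$ by definition.
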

For a gradient Ricci soliton, the following formulas hold \cite{2,64}: 
\begin{equation}\label{3.1}
g(\nabla_X \nabla f, Y ) + Ric(X, Y ) -\lambda g(X, Y ) = 0, 
\end{equation}
\begin{equation}\label{3.2}  
R(X, Y )\nabla f = (\nabla_Y S)X - (\nabla_X S)Y,  
\end{equation}
\begin{equation}\label{3.3}  
\Delta f = n\lambda - r,  
\end{equation} 
\begin{equation}\label{3.4}  
\nabla_{\nabla f} r=2Ric(\nabla f, \nabla f),  
\end{equation}
\begin{equation}\label{3.5}   
\Delta_f r = 2 \lambda r- 2|Ric|^2, 
\end{equation}
where $\Delta_f :=\Delta - \nabla_{\nabla f}$ denotes the $f$-Laplacian.
\begin{corollary}\label{..5}\cite{69}
Let $(M, g)$ be a gradient Ricci soliton such that, at each point, the Ricci tensor has a nonzero eigenvalue of multiplicity $n-1$, then 
$\widetilde{M}=N^{n-1}\times \mathbb{R}$. Moreover, if $n\geq 4$ then $N$ is Einstein.
\end{corollary}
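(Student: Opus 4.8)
The plan is to show that the simple Ricci eigendirection is spanned by a parallel unit vector field, so that the de~Rham decomposition theorem produces the product, and then to read off the ``moreover'' clause from Schur's lemma. I would work on the open dense set where $\nabla f\neq 0$ and the two Ricci eigenvalues are distinct; on its complement either $M$ is Einstein or $\nabla f$ vanishes on a nowhere dense set, and the conclusion propagates there by the real-analyticity of soliton metrics. Write $\rho$ for the eigenvalue of multiplicity $n-1$, so $\rho\neq 0$, and $\sigma$ for the simple one; let $D_\rho$ and $D_\sigma=\mathrm{span}(e_0)$ be the corresponding smooth eigendistributions, $e_0$ a unit field, $\theta=e_0^{\flat}$, so that by \eqref{3.1}
\[
\mathrm{Ric}=\rho\,g+(\sigma-\rho)\,\theta\otimes\theta,\qquad \mathrm{Hess}\,f=(\lambda-\rho)\,g+(\rho-\sigma)\,\theta\otimes\theta .
\]
The heart of the proof is to establish $\nabla\theta=0$.

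For this I would combine two ingredients. Computing $\mathrm{div}\,\mathrm{Ric}$ from the expression above and matching it with the contracted second Bianchi identity $\mathrm{div}\,\mathrm{Ric}=\tfrac12\,dr$, $r=(n-1)\rho+\sigma$, produces relations binding $d\rho$, $d\sigma$ and $\nabla_{e_0}\theta$. On the other hand, $\mathrm{Hess}\,f$ being a Hessian obeys the Codazzi-type identity $(\nabla_X\mathrm{Hess}\,f)(Y,Z)-(\nabla_Y\mathrm{Hess}\,f)(X,Z)=\langle R(X,Y)\nabla f,Z\rangle$, which is precisely \eqref{3.2}; inserting the above form of $\mathrm{Hess}\,f$ and evaluating on an adapted orthonormal frame $\{e_0,e_1,\dots,e_{n-1}\}$ with $e_i\in D_\rho$ for $i\geq 1$, then separating the $D_\sigma$- and $D_\rho$-components, forces the shear of $e_0$ along $D_\rho$ to vanish; feeding this back into the Bianchi relations kills $\nabla_{e_0}\theta$ as well, so $\nabla e_0=0$. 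Once $e_0$ is parallel, $R(\cdot,\cdot)e_0=0$ gives $\mathrm{Ric}(e_0)=0$, i.e.\ $\sigma\equiv 0$, and the parallel line field $D_\sigma$ together with its parallel orthogonal complement $D_\rho$ yields, via de~Rham, an isometric splitting $\widetilde M=N^{\,n-1}\times\mathbb{R}$, the $\mathbb{R}$-factor being the integral line of $e_0$ and $N$ a leaf of $D_\rho$.

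On the product, $\mathrm{Ric}_{\widetilde M}=\mathrm{Ric}_N\oplus 0$, hence $\mathrm{Ric}_N=\rho\,g_N$; when $n\geq 4$ one has $\dim N=n-1\geq 3$, so the classical Schur lemma forces $\rho$ to be constant and $N$ to be Einstein, which is the ``moreover'' assertion (for $n=3$, $N$ is merely a surface and $\rho$ may genuinely vary, as Hamilton's cigar soliton times $\mathbb{R}$ illustrates). I expect the difficulty to sit entirely in the step $\nabla\theta=0$: the curvature term $R(X,Y)\nabla f$ in the Codazzi identity is not determined by $\mathrm{Ric}$ once $n\geq 4$, so one must exploit the codimension-one structure of the eigenspace decomposition carefully — which is exactly where the frame bookkeeping, and the dividing line between $n=3$ and $n\geq 4$, come into play.
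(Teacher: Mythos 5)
First, a point of comparison: the paper itself does not prove this corollary --- it is imported verbatim from Petersen--Wylie \cite{69} --- so there is no in-paper argument to measure your attempt against, and I can only judge the sketch on its own terms. Its outer shell is correct: if the simple eigendirection $e_0$ is parallel, then $R(\cdot,\cdot)e_0=0$ forces $\sigma\equiv 0$, the de~Rham theorem gives $\widetilde M=N^{n-1}\times\mathbb{R}$, Schur's lemma on the $(n-1)$-dimensional factor yields the Einstein conclusion for $n\geq 4$, and the cigar soliton times $\mathbb{R}$ is exactly the right example showing why $n=3$ is excluded from the ``moreover''.

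The gap is that the entire content of the statement --- $\nabla\theta=0$ --- is announced rather than derived. You name two ingredients (the contracted second Bianchi identity and the soliton Codazzi identity \eqref{3.2}) and assert that ``separating the components forces the shear of $e_0$ to vanish'' and that ``feeding this back kills $\nabla_{e_0}\theta$''. As written this does not close. The contracted Bianchi identity supplies only $n$ scalar equations, involving $d\rho$, $d\sigma$, $\mathrm{div}\,e_0$ and $\nabla_{e_0}\theta$; it says nothing about the trace-free part of the second fundamental form of $D_\rho$, i.e.\ the components $\langle\nabla_{e_i}e_j,e_0\rangle$ for $i,j\geq 1$, all of which must vanish for $e_0$ to be parallel. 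The Codazzi identity would have to produce these, but its right-hand side is $R(X,Y)\nabla f$, and --- as you concede in your own closing paragraph --- for $n\geq 4$ this is not determined by $\mathrm{Ric}$, so substituting the expression for $\mathrm{Hess}\,f$ and differentiating does not by itself yield usable equations. One needs genuinely more soliton-specific input, for instance the vector identity $\nabla r=2\,\mathrm{Ric}(\nabla f)$ combined with an argument that $\nabla f$ lies in the simple eigendirection, which is the kind of leverage \cite{69} actually uses; until that computation is carried out, this is a plan, not a proof. A smaller secondary issue: your reduction to the open dense set where the two eigenvalues are distinct requires reading the hypothesis as multiplicity \emph{exactly} $n-1$; if $\sigma=\rho$ on an open set the metric is Einstein there and the splitting conclusion is simply false (a round sphere), so that case must be excluded by hypothesis, not absorbed by an appeal to real-analyticity.
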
 
 \begin{proof}[\normalfont{\textbf{Proof of Theorem \ref{th1}}}]
We are going to prove that $3$-dimensional pseudo-symmetric gradient Ricci solitons are semi-symmetric which is equivalent to $L\equiv 0$ on $M$ and then the result follows from \cite{30}. Assume for a contradiction there is $q\in M$ which $L(q)\neq0$. This holds on a small neighborhood $U$ containing $q$. Since $g$ can not be of constant curvature on a small open subset of $U$, then proposition \ref{1} implies the eigenvalues of the Ricci operator $S$ at $q\in M$ be $\mu_1= \mu_2:=\mu$ and $\mu_3=2L$.

If, on a small open subset of $U$, $\mu=2L$ and $(\mu, \mu, \mu)$, $\mu \neq 0$ or $(0, 0, 0)$ are the eigenvalues of the Ricci operator then the restriction of $g$ to $U$ is an Einstein space and $\mu$ is constant on $U$ and on $M$. Therefore, $M^3$ has constant curvature and this is a contradiction.   

Now let $\mu\neq 2L$ and $(\mu, \mu, 2L)$ are the eigenvalues of the Ricci operator. Then there is a local orthonormal frame field $\lbrace e_i\rbrace_{i=0}^2$ around $q$ such that $Se_0=2L e_0$, $Se_i=\mu e_i,~i=1,2$. 
Let   
\begin{equation*}
\nabla_{e_i} e_j = \sum_k B_{ijk} e_k ~~for~~ i, j, k = 0, 1, 2 .
\end{equation*}
Direct calculation implies
\begin{equation}\label{17}
r=2(\mu+L),
\end{equation}
\begin{equation}\label{2} 
\begin{array}{ll}
R(e_1, e_2)= (\mu- L)e_1\wedge e_2,\\  
R(e_1, e_0)=L e_1\wedge e_0,\\
R(e_2, e_0)=L e_2\wedge e_0, 
\end{array}
\end{equation}
\begin{equation}\label{21}
\begin{array}{ll}
Ric(e_i, e_j)=0~ for~ i\neq j,\\ 
Ric(e_1,e_1)=Ric(e_2, e_2)=\mu,\\
Ric(e_0,e_0)=2L,
\end{array}
\end{equation}
\begin{equation*} 
\begin{array}{ll}
(\nabla_{e_0}R)(e_1, e_2)=e_0(\mu-L)e_1\wedge e_2+(\mu-2L)B_{010}e_0\wedge e_2+(\mu-2L)B_{020}e_1\wedge e_0,\\  
(\nabla_{e_1}R)(e_2, e_0)=e_1(L)e_2\wedge e_0+(2L-\mu)B_{101}e_2\wedge e_1,\\
(\nabla_{e_2}R)(e_0, e_1)=e_2(L)e_0\wedge e_1+(2L-\mu)B_{202}e_2\wedge e_1.
\end{array}
\end{equation*} 
The second Bianchi identity yields
\begin{equation}\label{50} 
\begin{array}{ll}
(\nabla_{e_0}R)(e_1, e_2)+(\nabla_{e_1}R)(e_2, e_0)+(\nabla_{e_2}R)(e_0, e_1)=\\
(e_0(\mu-L)+(\mu-2L)(B_{101}+B_{202}))e_1\wedge e_2+(e_1(L)+(2L-\mu) B_{010})e_0\wedge e_1\\
+(e_2(L)+(2L-\mu)B_{020})e_2\wedge e_0 =0.  
\end{array}
\end{equation}
Then
\begin{equation}\label{51}
\begin{array}{ll}
e_0(\mu-L)=(2L-\mu)(B_{101}+B_{202}),\\  
e_1(L)=(\mu-2L) B_{010},\\ 
e_2(L)=(\mu-2L)B_{020}.
\end{array}
\end{equation}
Putting $X=e_0, Y=e_1$ in \eqref{3.2} and using \eqref{51} give 
\begin{equation}\label{52}  
\begin{array}{ll}
f_1L=(\mu-2L)B_{010},\\
e_0(\mu)=(2L-\mu)B_{101}+Lf_0,\\
B_{102}=0,\\
e_1(L)=f_1L.
\end{array}
\end{equation} 
Similarly, for $X=e_i, Y=e_2$, $i=0,1$ we get
\begin{equation}\label{53}   
\begin{array}{ll} 
f_2L=(\mu-2L)B_{020},\\
e_0(\mu)=(2L-\mu)B_{202}+Lf_0,\\
B_{201}=0,\\
e_2(L)=f_2L,
\end{array}
\end{equation}
and
\begin{equation}\label{54}  
\begin{array}{ll} 
e_1(\mu)=f_1(\mu-L),\\
e_2(\mu)=f_2(\mu-L), 
\end{array}
\end{equation}
respectively. Comparing \eqref{52} and \eqref{53}, we deduce
\begin{equation}\label{560}    
B_{101}=B_{202}.
\end{equation}
On the other hand, putting $X=e_i, i=0,1$ and $Y=e_j, j=0,1,2$ in \eqref{3.1}, gives
\begin{equation}\label{56}   
e_0(f_0)+f_1B_{010}+f_2B_{020}+2L-\lambda=0,
\end{equation}
\begin{equation}\label{57}   
e_1(f_1)+f_0B_{101}+f_2B_{121}+\mu-\lambda=0, 
\end{equation}
\begin{equation}\label{59}    
e_0(f_1)+f_0B_{001}+f_2B_{021}=0,
\end{equation}
\begin{equation}\label{60}   
e_0(f_2)+f_0B_{002}+f_1B_{012}=0. 
\end{equation}
Also from \eqref{3.3} one can easily get   
\begin{equation}\label{55}   
div(f_0e_0)+div(f_1e_1)+div(f_2e_2)=3\lambda-2(\mu+L).  
\end{equation}
\begin{lemma}\label{l1} 
The smooth function $L$ on $M$ is a non-constant function.
\end{lemma}  
\begin{proof}
Assume that $L$ is a constant function. Thus from \eqref{51}, \eqref{52} and \eqref{53}, it follows that
\begin{equation}\label{20}
\begin{array}{ll} 
B_{010}= B_{020}=0,\\
f_1=f_2=0.
\end{array}
\end{equation}
Then \eqref{54}, \eqref{56}, \eqref{57} and \eqref{55} reduce to the following equations:
\begin{equation}\label{101}  
e_1(\mu)=e_2(\mu)=0,
\end{equation}
\begin{equation}\label{36}   
e_0(f_0)+2L=\lambda,
\end{equation}
\begin{equation}\label{37}   
f_0 B_{101}+\mu=\lambda,
\end{equation}
\begin{equation}\label{38}   
div(f_0e_0)=3\lambda-2(\mu+L).
\end{equation}
We note that $f_0\neq0$, because if $f_0=0$, \eqref{36} and \eqref{37} imply $\mu=2L$ and this is a contradiction. 
Equation \eqref{3.4} together with \eqref{20} give 
\begin{equation}\label{.25}  
e_0(\mu)=2f_0L.
\end{equation}
In view of \eqref{101}, \eqref{38} and \eqref{.25}, it follows from \eqref{3.5} that
\begin{equation}\label{39}   
f_0^2L=(\lambda-\mu)(2L-\mu).
\end{equation}
Taking derivative of \eqref{39} with respect to $e_0$ and using \eqref{36} and \eqref{.25}, we obtain 
$\mu$ is constant which implies by \eqref{.25}  $Lf_0=0$ and this is a contradiction. Then $L$ is a non-constant function.
\end{proof}
\begin{lemma}
The eigenvalue $\mu$ of the Ricci operator $S$ can not be zero.
\end{lemma}
\begin{proof} 
If $\mu=0$ from equation \eqref{54} we have $f_1=f_2=0$. Then \eqref{56}, \eqref{57} and \eqref{55} reduce
\begin{equation}\label{6..6}   
e_0(f_0)=\lambda-2L,    
\end{equation}
\begin{equation}\label{6..7}   
f_0B_{101}=\lambda-\mu,  
\end{equation}
\begin{equation}\label{68}   
f_0div e_0=2\lambda,   
\end{equation}
respectively. Also \eqref{51}, \eqref{52}, \eqref{53} and \eqref{560} imply
\begin{equation}\label{70} 
\begin{array}{ll} 
e_1(L)=e_2(L)=0,\\
e_0(L)=2Lf_0.
\end{array}
\end{equation}
By means of \eqref{6..6}, \eqref{6..7} and $\mu\neq 2L$, we get $f_0\neq0$.
%Since $\mu\neq 2L$, then according to \eqref{6..6} and \eqref{6..7}, $f_0\neq0$.
Using these results in \eqref{3.5}, it follows that $f_0^2+2\lambda=0$. Taking derivative of last equation with respect to $e_0$ and using \eqref{6..6} give $\lambda=2L$. Then $L$ is constant which is impossible by lemma \ref{l1}.
\end{proof}
Then at each point, the Ricci tensor has nonzero eigenvalue of multiplicity $2$ and according corollary \ref{..5}, $M^3\simeq N^{2}\times \mathbb{R}$. Hence $S$ has at least one zero eigenvalue and since $\mu\neq0$, we get $L=0$ and this is a contradiction. Then this case can not occur and this complete the proof.
\end{proof}
\section{Pseudo-symmetry gradient Yamabe solitons} 
Let $(M, g)$ be a $3$-dimensional semi-symmetric manifold and $(\mu_1, \mu_2, \mu_3)$ are the eigenvalues of the Ricci operator at $q \in M$.
It is shown that \cite{40} the condition $R.R=0$ is equivalent to
\begin{equation*}\label{5}
(\mu_i -\mu_j)(2(\mu_i + \mu_j) - r) = 0,
\end{equation*}
and one can consider only the three cases $(\mu, \mu, \mu), (\mu, \mu, 0)$ and $(0, 0, 0)$ at each point, where $\mu$ is a differentiable function on $M$.

The following formulas hold on a gradient Yamabe soliton \cite{41}: 
\begin{equation}\label{6.5}
-Ric(\nabla f, X) = (n - 1)\nabla_X r,
\end{equation} 
\begin{equation}\label{..1}  
\nabla_i G=2r\nabla_i f,
\end{equation}
where $G:=|\nabla f|^2$.
\begin{proposition}\label{pro2}
 Every $3$-dimensional semi-symmetric nontrivial gradient Yamabe soliton manifold is locally isometric to either $\mathbb{R}^3$, $\mathbb{S}^3$, $\mathbb{H}^3$, $\mathbb{R}\times\mathbb{S}^2$ or $\mathbb{R}\times\mathbb{H}^2$.
\end{proposition}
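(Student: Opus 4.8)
The plan is to imitate the strategy of Theorem~\ref{th1}: use the curvature classification of semi-symmetric $3$-manifolds to reduce to one genuinely non-space-form case, and then extract a local product structure from the soliton equations. Since $M$ is semi-symmetric, at each point the Ricci operator has eigenvalues $(\mu,\mu,\mu)$, $(\mu,\mu,0)$ or $(0,0,0)$ for some function $\mu$; restricting to an open set on which the number of distinct eigenvalues is locally constant, I would handle these regimes one at a time. On the $(0,0,0)$ locus $M$ is Ricci-flat, hence flat in dimension $3$, so locally $\mathbb{R}^3$; on the $(\mu,\mu,\mu)$ locus $M$ is Einstein, hence (Schur, dimension $3$) of constant sectional curvature, so locally $\mathbb{R}^3$, $\mathbb{S}^3$ or $\mathbb{H}^3$. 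The whole content is therefore the open set $U$ where the Ricci operator has exactly the two eigenvalues $(\mu,\mu,0)$ with $\mu\neq0$; shrinking $U$ I may also assume $\nabla f\neq0$ on $U$, since $f$ is non-constant.

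On $U$ I would choose an orthonormal frame $\{e_0,e_1,e_2\}$ with $Se_0=0$, $Se_i=\mu e_i$ $(i=1,2)$, and write $\nabla f=f_0e_0+f_1e_1+f_2e_2$. Differentiating \eqref{9.7} (which makes $Hess\,f$ a scalar multiple of $g$, the scalar depending only on $r$) yields a curvature identity of the form $R(X,Y)\nabla f=\kappa\,\bigl(Y(r)X-X(r)Y\bigr)$ with $\kappa$ a fixed nonzero constant; the exact value of $\kappa$, like the exact constants in \eqref{6.5} and \eqref{..1}, will not matter. By \eqref{4.01} one has $R(e_1,e_2)=\mu\,e_1\wedge e_2$, so evaluating the identity on $(e_1,e_2)$ makes $f_1,f_2$ proportional to $e_1(r),e_2(r)$, while \eqref{6.5} makes $e_1(r),e_2(r)$ proportional (with a different ratio) to $\mu f_1,\mu f_2$; since $\mu\neq0$ the two relations are compatible only if $f_1=f_2=0$ and $e_1(r)=e_2(r)=0$, and \eqref{6.5} with $X=e_0$ gives $e_0(r)=0$ as well. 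Hence $r=2\mu$ is constant, $\mu$ is a nonzero constant, $\nabla f=f_0e_0$ with $f_0$ nowhere zero, and \eqref{..1} shows $|f_0|$ is constant along the level sets of $f$. Plugging $\nabla f=f_0e_0$ into \eqref{9.7} and using $g(\nabla_{e_i}e_0,e_0)=0$ then gives $\nabla_{e_0}e_0=0$, $\nabla_{e_i}e_0=\tfrac{c}{f_0}e_i$ and $e_0(f_0)=c$, where $c$ is the constant value of the scalar in $Hess\,f=c\,g$.

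The crux is to show $c=0$: geometrically, $c\neq0$ would only present $M$ as a proper warped product over the $e_0$-geodesics rather than a metric product. Since $r$ is constant, the contracted second Bianchi identity forces $div\,Ric=\tfrac12\,dr=0$; expanding $(div\,Ric)(e_0)=\sum_i(\nabla_{e_i}Ric)(e_i,e_0)$ with the connection coefficients found above, together with $Ric(\cdot,e_0)\equiv0$ and $Ric(e_i,e_i)=\mu$, collapses it to $-2\,\tfrac{c}{f_0}\mu=0$, whence $c=0$ because $\mu\neq0$ and $f_0\neq0$. Thus $Hess\,f\equiv0$, so $\nabla f$ is a nowhere-zero parallel vector field and, by the local de~Rham theorem, $U$ splits isometrically as $\mathbb{R}\times N^2$ with $N$ a totally geodesic level set of $f$. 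By the Gauss equation the curvature of $N$ equals the sectional curvature in $M$ of the plane $e_1\wedge e_2$, which by \eqref{4.01} equals $\mu$, a nonzero constant; hence $N$ is locally $\mathbb{S}^2$ or $\mathbb{H}^2$ according to the sign of $\mu$, and $M$ is locally isometric to $\mathbb{R}\times\mathbb{S}^2$ or $\mathbb{R}\times\mathbb{H}^2$. I expect the step $c=0$ to be the real obstacle — it is where constancy of the scalar curvature, hence $div\,Ric=0$, is used decisively — together with the minor bookkeeping needed to glue the three eigenvalue regimes over a common neighbourhood.
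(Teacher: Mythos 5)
Your argument is correct and follows the same skeleton as the paper (Sekigawa's trichotomy of Ricci eigenvalues, the Einstein cases giving space forms, and a final reduction to the $(\mu,\mu,0)$, $\mu\neq0$ regime), but your treatment of that decisive regime is genuinely different. The paper writes out \eqref{9.7} componentwise in the adapted frame, combines \eqref{6.5} and \eqref{..1} to obtain $f_0(\lambda-4\mu)=0$, and runs a three-way subcase analysis, extracting the product structure from the vanishing of the connection coefficients $B_{ijk}$. You instead invoke the concircularity identity $R(X,Y)\nabla f=Y(r)X-X(r)Y$ (correct, with $\kappa=1$), which together with $R(e_0,e_i)=0$ and $R(e_1,e_2)=\mu\,e_1\wedge e_2$ kills $f_1,f_2$ and makes $r$ constant in one stroke, and then the contracted second Bianchi identity $div\,Ric=\tfrac12dr=0$ forces $Hess f\equiv0$, so the local de~Rham theorem gives the splitting. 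Your route is shorter and, importantly, insensitive to the sign conventions in \eqref{6.5} and \eqref{..1}: as stated those identities carry a sign relative to \eqref{9.7} (from \eqref{9.7} one gets $\nabla_iG=2(\lambda-r)\nabla_if$ rather than $2r\nabla_if$), and the paper's surviving case~2 (where $\lambda=4\mu$) sits uneasily with its own \eqref{3.02}, \eqref{3.333} and \eqref{3.11}, which with $f_1=f_2=0$ give $f_0B_{101}=\lambda-2\mu=-f_0B_{202}$ and hence $\lambda=2\mu$; your conclusion $Hess f=0$, $\lambda=r=2\mu$ is the one consistent with the actual model $\mathbb{R}\times\mathbb{S}^2$. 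Two items you wave at but should actually write out: the gluing of the three eigenvalue regimes (the paper does this via the continuity argument on $m(x)=\max_i|\mu_i(x)|$, showing exactly one regime occurs on a connected $M$), and the justification for assuming $\nabla f\neq0$ near the point under consideration (an elision the paper also makes).
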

\begin{proof}
If there is an open neighborhood $U$ of $q$ with $(\mu, \mu, \mu), \mu \neq 0$ (or $(0, 0, 0)$,
respectively), then we can see that $M$ is Einstein. Hence $M$ is of constant curvature and is locally isometric to either $\mathbb{R}^3$, $\mathbb{S}^3$ or $\mathbb{H}^3$.

Now we suppose $(\mu, \mu, 0), \mu \neq 0$ are the eigenvalues of the Ricci operator on a small open subset of $M$. Let $\lbrace e_0, e_1, e_2 \rbrace$ be a local orthonormal frame field around $q$ such that 
\begin{equation}\label{0.0} 
\begin{array}{ll}
Ric(e_1, e_1) = Ric(e_2, e_2) = \mu\\ 
Ric(e_1, e_2) = Ric(e_0, e_j) = 0,~~j=0,1,2,  
\end{array} 
\end{equation} 
\begin{equation}\label{0.01}
\begin{array}{ll} 
R(e_1, e_2) = \mu e_1 \wedge e_2\\
all ~others ~R(e_i, e_j) =0. 
\end{array}
\end{equation} 
From \eqref{51} for $L=0$ we have
\begin{equation}\label{11} 
B_{001} = B_{002} = 0,
\end{equation}  
\begin{equation}\label{12}  
e_0(\mu) +\mu(B_{101} + B_{202}) = 0. 
\end{equation} 
Let $M$ admits a nontrivial gradient Yamabe soliton with a potential vector field
$\nabla f= f_0e_0 + f_1e_1 + f_2e_2$, where $f_i = g(\nabla f, e_i), i = 0, 1, 2$. Suppose
\begin{equation*}
\nabla_{e_i}e_j =\Sigma_k B_{ijk}e_k~~ for~ i, j, k = 0, 1, 2.
\end{equation*}  
The tangent space $T_qM$ at $q$ is $T_qM = D_0(q) \oplus D_1(q)$, where $D_0(q) = span_{\mathbb{R}}\lbrace e_0\rbrace$ and $D_1(q) = span_{\mathbb{R}}\lbrace e_1, e_2\rbrace$.
Putting $X=e_i, ~Y=e_j,~i,j=0,1,2$ in \eqref{9.7} and using \eqref{11} give 
\begin{equation}\label{3.02}
e_1(f_1)+f_0B_{101}+f_2B_{121}=\lambda-2\mu, 
\end{equation}
\begin{equation}\label{3.333}
e_2(f_2)+f_0B_{202}+f_1B_{212}=\lambda-2\mu,
\end{equation}
\begin{equation}\label{3.444}
e_0(f_0)=\lambda-2\mu,
\end{equation} 
\begin{equation}\label{3.555}
e_0(f_1)+f_2B_{021}=0,
\end{equation}
\begin{equation}\label{3.6} 
e_0(f_2)+f_1B_{012}=0,
\end{equation}
\begin{equation}\label{3.8}
e_1(f_2)+f_1B_{112}+f_0B_{102}=0, 
\end{equation}
\begin{equation}\label{3.10}
e_2(f_1)+f_0B_{201}+f_2B_{221}=0.
\end{equation}
Replacing $X$ by $e_i$, $i=0, 1, 2$ in \eqref{6.5} and using \eqref{0.0} we get  
\begin{equation}\label{13} 
e_0(\mu) = 0, ~~~~~4e_1(\mu)=-f_1\mu,~~~~~~~4e_2(\mu)=-f_2\mu.
\end{equation}
In view of \eqref{12}, \eqref{13} and $\mu\neq 0$, it follows that
\begin{equation}\label{3.11} 
B_{101} =- B_{202}.
\end{equation}
Equation \eqref{..1} for $i=0$, together with \eqref{3.444}, \eqref{3.555} and \eqref{3.6} yields
\begin{equation}\label{3.12} 
f_0(\lambda-4\mu)=0.
\end{equation} 
Then we have the following cases.
\begin{itemize} 
\item[case 1.] $f_0=0$ and $\lambda\neq4\mu$\\
In this case, from equation \eqref{3.444} $\mu$ is constant and then \eqref{13} implies $f_1=f_2=0$. Hence $f$ is constant and $(M, g)$ is a trivial gradient Yamabe soliton, contradicting our initial assumption.
\item[case 2.] $f_0\neq0$ and $\lambda=4\mu$\\
Since $\mu$ is constant, equation \eqref{13} gives $f_1=f_2=0$. Hence \eqref{3.8} and \eqref{3.10} yield
\begin{equation}\label{3.13} 
B_{102}=B_{120}=B_{201}=B_{210}=0.
\end{equation}
Also from \eqref{3.02}, \eqref{3.333} and \eqref{3.11} we have  
\begin{equation}\label{3.14} 
B_{101}=B_{110}=B_{202}=B_{220}=0.
\end{equation}
Equations \eqref{11}, \eqref{3.13} and \eqref{3.14} imply that each conullity distribution $D_1$ and $D_0$ is integrable and forms a totally geodesic submanifold. Then $M^3$ is a local product space of a $2$-dimensional manifold $N_1^2$ and $1$-dimensional $\mathbb{R}$ almost everywhere.
Since $\mu$ is constant $N_1^2$ is locally isometric to either $\mathbb{S}^2$, $\mathbb{H}^2$ or $\mathbb{R}^2$ and then $M^3$ is locally isometric to either $\mathbb{R}\times\mathbb{S}^2$, $\mathbb{R}\times\mathbb{H}^2$ or $\mathbb{R}^3$.
\item[case 3.] $f_0=0$ and $\lambda=4\mu$\\
This case can not occur, because from \eqref{3.444} we get $\lambda=2\mu$. Hence $\mu=0$, which is a contradiction.
\end{itemize}  
Now we show that the combination of the three cases $(\mu, \mu, \mu), (\mu, \mu, 0)$ and $(0, 0, 0)$ can not occur in an open neighborhood of $q$. Let $M=F_0\cup F_1\cup F_2$ such that 
$$F_0=\lbrace x\in M: {\mbox{the eigenvalues of the Ricci opertor $S$ at $x$ is }} (\mu,\mu,\mu), \mu\neq0\rbrace,$$ 
$$F_1=\lbrace x\in M: {\mbox{the eigenvalues of the Ricci opertor $S$ at $x$ is }} (\mu,\mu,0), \mu\neq0\rbrace,$$
$$F_2=\lbrace x\in M: {\mbox{the eigenvalues of the Ricci opertor $S$ at $x$ is }} (0,0,0) \rbrace.$$
Then $F_0^\circ\cup F_1^\circ\cup F_2^\circ$ is an open dense subset in $M$. We define a continuous function $m$ on $M$ which is
$$\begin{array}{ll}
m: & M\longrightarrow \mathbb{R}_+  \\  
& x\longrightarrow \max_{i=0,1,2}(|\mu_i(x)|).  
 \end{array}$$
\noindent \newline
 Then the scalar curvature on $F_i,  i=0,1,2$, is $|r(x)|=n(x)m(x)$ for $n(x)$ in $\lbrace 0, 2, 3\rbrace$.
Since on each $F_i$, $m$ is constant, we deduce that $m$ is constant on $M$ (a continuous function taking a finite number of values on a connected space is constant). If $m\equiv0$ then $F_2=M$. Otherwise $n(x)=\dfrac{|r(x)|}{m(x)}$  is continuous and takes values in $\lbrace 2, 3\rbrace$. Then $n(x)$ is constant. Hence either $M=F_0$, $M=F_1$ or $M=F_2$.
\end{proof}   
%\begin{corollary}\label{1}   
%Every $3$-dimensional semi-symmetric (non-)trivial gradient Yamabe soliton manifold  has constant scalar curvature. 
%\end{corollary}    
\begin{proposition}\label{pro3}
Let $(M, g)$ be a $3$-dimensional Riemannian manifold and the metric on it is a nontrivial gradient Yamabe soliton. Then $M$ is  pseudo-symmetric if and only if it is semi-symmetric.
\end{proposition}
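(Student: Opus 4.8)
\medskip
\noindent\textbf{Proof proposal.}
One implication is free: a semi-symmetric manifold satisfies $R.R=0=0\cdot Q(g,R)$, hence is pseudo-symmetric with $L\equiv 0$. For the converse the plan is to copy the scheme of Theorem~\ref{th1}: assuming pseudo-symmetry, I would show that $L\equiv 0$ off the constant-curvature set, so that $M$ is semi-symmetric and Proposition~\ref{pro2} finishes the proof. Suppose not, and pick $q$ with $L(q)\neq 0$; work on a neighbourhood $U$ on which $L\neq 0$ and $g$ is nowhere of constant curvature. Proposition~\ref{1} gives Ricci eigenvalues $(\mu,\mu,2L)$ on $U$; if $\mu=2L$ on an open subset then $M$ is Einstein there, hence of constant curvature, contradicting $q\in U$, so $\mu\neq 2L$ on $U$. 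Fix the adapted orthonormal frame $\{e_0,e_1,e_2\}$ with $Se_0=2Le_0$ and $Se_i=\mu e_i$ ($i=1,2$), write $\nabla f=f_0e_0+f_1e_1+f_2e_2$, $\nabla_{e_i}e_j=\sum_kB_{ijk}e_k$, and record $r=2(\mu+L)$, the curvature operators coming from \eqref{4.01} (so $R(e_1,e_2)=(\mu-L)e_1\wedge e_2$ and $R(e_i,e_0)=Le_i\wedge e_0$) and the second Bianchi relations \eqref{51}.

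The core step is to locate $\nabla f$. Differentiating the gradient Yamabe soliton equation \eqref{9.7}, which reads $\nabla_X\nabla f=(\lambda-r)X$, gives the identity $R(X,Y)\nabla f=(Yr)X-(Xr)Y$ on all of $M$. Evaluating it on $(e_1,e_2)$ and on $(e_1,e_0)$ and comparing with the explicit operators above yields $(\mu-L)f_1=e_1r=Lf_1$, hence $(\mu-2L)f_1=0$ and so $f_1=0$ because $\mu\neq 2L$; similarly $f_2=0$. Thus $\nabla f=f_0e_0$, $e_1r=e_2r=0$, and $e_0r=Lf_0$. Feeding this back, \eqref{6.5} on $e_0$ reads $2Lf_0=-2\,e_0r$, the $i=0$ component of \eqref{..1} (with $G=f_0^2$) reads $f_0(e_0f_0-r)=0$, and the $(e_0,e_0)$-component of \eqref{9.7} reads $e_0f_0=\lambda-r$; chasing these forces $r$ constant on $U$ and then, via \eqref{6.5} once more, $f_0\equiv 0$. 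Hence $\nabla f\equiv 0$ and $r\equiv\lambda$ on $U$, so $\mu+L$ is constant on $U$. These computations are routine and parallel the bookkeeping in Lemma~\ref{l1} and Theorem~\ref{th1}.

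The hard part, which I expect to be the real obstacle, is ruling out this degenerate configuration. Pseudo-symmetry, via Proposition~\ref{1}, controls only the pointwise Ricci eigenvalue pattern, and a pseudo-symmetric non-semi-symmetric $3$-manifold of constant scalar curvature is not forbidden on its own, so on the open set $\Omega=\mathrm{int}\{\nabla f=0\}\supseteq U$ the soliton equation yields nothing locally. The way out must be global. Off $\Omega$ we have $L\equiv 0$, so there $M$ is a semi-symmetric gradient Yamabe soliton with $\nabla f\neq 0$, hence locally one of the five model spaces by Proposition~\ref{pro2}, with Ricci eigenvalues among $(\mu,\mu,\mu),(\mu,\mu,0),(0,0,0)$. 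I would try to match these against the pattern $(\mu,\mu,2L)$ along $\partial\Omega$ by continuity (forcing $L\to 0$) and use the rigidity of the model spaces, or else propagate $\nabla f\equiv 0$ from $\Omega$ across $\partial\Omega$ along the linear equation $\tfrac{D}{dt}\nabla f=(\lambda-r)\dot\gamma$ together with $r\equiv\lambda$ on $\Omega$, in order to conclude $\Omega=\varnothing$ and reach the contradiction. It is this gluing at the boundary of the degeneracy set, not the local tensor calculus, that needs genuine care.
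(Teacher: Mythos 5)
Your local computation is correct and takes a genuinely different route from the paper's. You derive the integrability condition $R(X,Y)\nabla f=(Yr)X-(Xr)Y$ from \eqref{9.7} and read off $(\mu-2L)f_1=(\mu-2L)f_2=0$ directly from the curvature operators \eqref{2}, then eliminate $f_0$; the paper instead combines \eqref{6.5} and \eqref{..1} into $f_i(\lambda-4(\mu+L))=0$ (its \eqref{5.12}), divides by the $f_i$ to get $\lambda=4(\mu+L)$, differentiates and feeds this back into \eqref{5.11} to obtain $f_0L=f_1\mu=f_2\mu=0$, and then disposes of the subcase $\mu=0$ separately via \eqref{51} and \eqref{5.4}. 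Both computations land at the same place, namely $\nabla f\equiv 0$ on the neighbourhood where $L\neq 0$; your version is shorter and avoids the case split on $\mu$.

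The one incomplete step in your proposal is the final one, which you flag yourself: you stop at $\nabla f\equiv 0$ on $U$ and only sketch two strategies for turning this into a contradiction with the nontriviality of $f$. Be aware that the paper does no better here --- it writes ``hence $f$ is constant, contradicting our assumption,'' which conflates local vanishing of $\nabla f$ with global constancy of $f$ (equivalently, it divides by $f_i$ in \eqref{5.12} without knowing that some $f_i$ is nonzero near $q$). So you have correctly identified a gap rather than missed an idea the paper contains. Moreover, your second strategy closes the gap with little effort and without any appeal to the model spaces: since $\mathrm{Hess}\,f=(\lambda-r)g$ holds on all of $M$, along any geodesic $\gamma$ issuing from a point of $\Omega=\mathrm{int}\{\nabla f=0\}$ the equation $\tfrac{D}{dt}\nabla f=(\lambda-r)\dot\gamma$ integrates in a parallel frame to $\nabla f(\gamma(t))=\bigl(\int_0^t(\lambda-r)\bigr)\dot\gamma(t)$. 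For $x$ in a convex ball centred at a boundary point of $\Omega$, the tangents at $x$ of the geodesics joining $x$ to the various points of the open set $\Omega\cap B$ are not collinear, yet $\nabla f(x)$ must be proportional to each of them; hence $\nabla f(x)=0$, the ball lies in $\Omega$, and $\Omega$ is open and closed. A nonempty $\Omega$ would then force $f$ to be constant on the connected manifold $M$. Adding this short lemma makes your proof complete, and in this respect more careful than the published one.
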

\begin{proof}
Since every semi-symmetric manifold is pseudo-symmetric, it is enough to proof the ``only if '' part which is equivalent to $L\equiv 0$ on $M$. Assume for a contradiction there is $q\in M$ which $L(q)\neq0$ and argue like in the proof of Theorem \ref{th1}. Since $g$ can not be of constant curvature on a small open subset of the neighborhood $U$ containing $q$, then proposition \ref{1} implies the eigenvalues of the Ricci operator $S$ at $q\in M$ be $\mu_1= \mu_2:=\mu$ and $\mu_3=2L$. If $\mu=2L$, then the restriction of $g$ to $U$ is an Einstein space and $\mu$ is constant on $U$ and on $M$. Therefore, $M^3$ has constant curvature and this is a contradiction.   

Now let $\mu\neq 2L$ and $Se_0=2L e_0$,  $Se_1=\mu e_1$ and $Se_2=\mu e_2$. Putting $X=e_i,~Y=e_j$, $i,j=0,1,2$ in \eqref{9.7} and using \eqref{17} give 
\begin{equation}\label{5.2}
e_1(f_1)+f_0B_{101}+f_2B_{121}=\lambda-2(\mu+L), 
\end{equation}
\begin{equation}\label{5.3}
e_2(f_2)+f_0B_{202}+f_1B_{212}=\lambda-2(\mu+L),
\end{equation} 
\begin{equation}\label{5.4}
e_0(f_0)+f_1B_{010}+f_2B_{020}=\lambda-2(\mu+L),
\end{equation} 
\begin{equation}\label{5.5} 
e_0(f_1)+f_0B_{001}+ f_2B_{021}=0,
\end{equation}
\begin{equation}\label{5.6}
e_0(f_2)+f_1B_{012}+f_0B_{002}=0,
\end{equation}
\begin{equation}\label{5.7}
e_1(f_0)+f_1B_{110}+f_2B_{120}=0, 
\end{equation}
\begin{equation}\label{5.8}
e_1(f_2)+f_1B_{112}+f_0B_{102}=0, 
\end{equation}
\begin{equation}\label{5.9}
e_2(f_0)+f_1B_{210}+f_2B_{220}=0,
\end{equation}
\begin{equation}\label{5.10}
e_2(f_1)+f_0B_{201}+f_2B_{221}=0.
\end{equation}
By virtue of \eqref{17} and \eqref{21} in \eqref{6.5} for $X=e_i$, $i=0, 1, 2$ we have
\begin{equation}\label{5.11} 
2e_0(\mu+L) = -f_0L, ~~~~~4e_1(\mu+L)=-f_1\mu,~~~~~~~4e_2(\mu+L)=-f_2\mu.
\end{equation}
Applying above equations in \eqref{..1} yields
%Also equation \eqref{..1} by means of above equations, yields
\begin{equation}\label{5.12} 
f_i(\lambda-4(\mu+L))=0 ,~~i=0,1,2.
\end{equation}
Since $f$ is a non-constant function, then $\lambda=4(\mu+L)$. Taking derivative with respect to $e_i,~i=0,1,2$ and using \eqref{5.11}, we obtain 
\begin{equation}\label{5.14}   
f_0=0,~~f_1\mu=0,~~f_2\mu=0. 
\end{equation} 
But $\mu\neq 0$, because otherwise $L=\dfrac{\lambda}{4}$ is constant and from \eqref{51} we have
 \begin{equation}\label{5.141}   
B_{020}=B_{010}=0. 
\end{equation}  
On the other hand, in view of \eqref{5.4}, \eqref{5.14} and \eqref{5.141}, it follows that $L=\dfrac{\lambda}{2}$. Then $L=0$ and this is a contradiction. Hence $f$ is constant, contradicting our assumption and then this case can not occur.
\end{proof}   
\begin{proof}[\normalfont{\textbf{Proof of Theorem \ref{th2}}}]
It follows from proposition \ref{pro2} and proposition \ref{pro3}.
\end{proof}
\section*{Acknowledgement} 
The first author would like to express her gratitude to Prof. Thomas Delzant and Department of Mathematics, Strasbourg University, that provided all facilities for studying during her research period in that university. She also would like to thank Prof. Charles Frances for his valuable comments in order to improve the paper.

%%%%%%%%%%%%%%%%%%%%%%%%%%%%%%%%%%%%%%%%%%%%%%%%%%%%%%%%%
%%%% Bibliography  %%%%%%%%%%


\begin{thebibliography}{99} 

\bibitem{00} 
{\sc A. Adam$\acute{o}$w and R. Deszcz}, {\em On totally umbilical submanifolds of some class of Riemannian manifolds}, Demonstratio Math., {\bf 16} (1983), 39-59.
\bibitem{0}
{\sc M. Belkhelfa, R. Deszcz, and L. Verstraelen}, {\em Symmetry properties of Sasakian space forms}, Soochow J. Math. {\bf 31} (2005), 611-616.
\bibitem{b} 
{\sc M. Belkhelfa, R. Deszcz, M. Głogowska, M. Hotlo´s, D. Kowalczyk, and L. Verstraelen}, {\em On some type of curvature conditions}, pp. 179–194 in PDEs, submanifolds and affine differential geometry (Warsaw, 2000), edited by B. Opozda et al., Banach Center Publ. 57, Polish Acad. Sci., Warsaw, (2002).
\bibitem{60}
{\sc H.D. Cao}, {\em Recent progress on Ricci solitons }, Adv. Lect. Math.  {\bf 11} (2010), No. 3, 1-38. 
\bibitem{180} 
{\sc H.D. Cao}, {\em Geometry of Ricci solitons}, Chinese Ann. Math. Ser. B, {\bf 27} (2006), No. 2, 121-142. 
\bibitem{d} 
{\sc G. Calvaruso}, {\em Conformally flat pseudo-symmetric spaces of constant type}, Czechoslovak Math. J. {\bf 56} (2006), No. 131, 649-657. 
\bibitem{1} 
{\sc  $\acute{E}$. Cartan}, {\em Lecons sur la g$\acute{e}$om$\acute{e}$trie des espaces de Riemann}, 2nd ed., Paris, (1946). 
\bibitem{30}
{\sc J.T. Cho and J. Park}, {\em Gradient Ricci soliton with semi-symmetry }, Bull. Korean Math. Soc. {\bf 51} (2014), No. 1, 213-219.
\bibitem{080} 
{\sc J.T. Cho, T. Hamada and J.I. Inoguchi}, {\em On three dimensional real hypersurfaces in complex space forms}, Tokyo J. Math. {\bf 33} (2010), No. 1, 31-47. 
\bibitem{c} 
{\sc J. T. Cho and J.-I. Inoguchi}, {\em Pseudo-symmetric contact $3$-manifolds}, J. Korean Math. Soc. {\bf 42} (2005), No. 5, 913-932.  
\bibitem{61} 
{\sc B. Chow, S.C. Chu, D. Glickenstein, C. Guenther, J. Isenberg, T. Ivey, D. Knopf, P. Lu, F. Luo and L. Ni}, {\em The Ricci flow: Techniques and Applications}, Part I: Geometric Aspects, Mathematical Surveys and Monographs, vol. 135, Amer. Math. Soc. (2007).
\bibitem{62} 
{\sc B. Chow, P. Lu and L. Ni}, {\em Hamilton's Ricci flow}, Graduate Studies in Mathematics, vol. 77, Amer. Math. Soc. (2006). 
\bibitem{41} 
{\sc P. Daskalopoulos and N. Sesum}, {\em The classification of locally conformally flat Yamabe solitons}, Adv. Math. {\bf 240} (2013), 346-369.
\bibitem{201} 
{\sc J. Deprez}, {\em Semi-parallel surfaces in Euclidean space}, J. Geom. {\bf 25} (1985), 192-200.
\bibitem{6} 
{\sc R. Deszcz}, {\em On pseudosymmetric spaces}, Bull. Soc. Math. Belg. S$\acute{e}$r. A, {\bf 44} (1992), 1-34.
\bibitem{6.0} 
{\sc R. Deszcz}, {\em Notes on totally umbilical submanifolds},in Geometry and Topology of Submanifolds, Luminy, May 1987, World Sci. Publ., Singapore , {\bf 44} (1989), 89-97.
\bibitem{6.00}  
{\sc R. Deszcz}, {\em On pseudosymmetric totally umbilical submanifolds of Riemannian manifolds admitting some types of generalized curvature tensors},Zesz. Nauk. Politechn. Slask., Issue dedicated to the 70th Birthday of Prof. Mieczys law Kucharzewski, {\bf 68} (1993), 171-187.
\bibitem{2} 
{\sc M. Eminenti, G. La Nave, and C. Mantegazza}, {\em Ricci solitons: the equation point of view }, Manuscripta Math. {\bf 127} (2008), No. 3, 345-367. 
\bibitem{63}  
{\sc M. Fern$\acute{a}$ndez-L$\acute{o}$pez and E. Garc$\acute{i}$a-R$\acute{i}$o}, {\em On Gradient Ricci Solitons with  constant curvature scalar curvature}, Proc. Amer. Math. Soc. {\bf 144} (2016), No. 1, 369-378. 
\bibitem{206}  
{\sc D. Ferus}, {\em Symmetric submanifolds of Euclidean space}, Math. Ann.  {\bf 247} (1980), 81-93.
\bibitem{l} 
{\sc S. Haesen and L. Verstraelen}, {\em Properties of a scalar curvature invariant depending on two planes}, Manuscripta Math. {\bf 122} (2007), 59-72.
\bibitem{81} 
{\sc R.S. Hamilton}, {\em The Ricci flow on surfaces}, Contemp. Math. {\bf 71} (1988), 237-261.
\bibitem{43} 
{\sc R.S. Hamilton}, {\em Three manifolds with positive Ricci curvature}, J. Diff. Geom. {\bf 17} (1982), 255-306.
\bibitem{e} 
{\sc N. Hashimoto and M. Sekizawa}, {\em Three-dimensional conformally flat pseudo-symmetric spaces of constant type}, Archivum Math. (Brno){\bf 36} (2000), No. 4, 279-286.   
\bibitem{82} 
{\sc T. Ivey}, {\em Ricci solitons on compact three-manifolds}, Differential Geom. Appl. {\bf 3} (1993), 301-307. 
\bibitem{9}
{\sc  O. Kowalski and M. Sekizawa}, {\em Three-dimensional Riemannian manifolds of c-conullity two}, Riemannian Manifolds of Conullity Two, World Scientific, Singapore, (1996). 
\bibitem{i} 
{\sc O. Kowalski and M. Sekizawa}, {\em Local isometry classes of Riemannian 3-manifolds with constant Ricci eigenvalues $\rho_1 = \rho_2 \neq \rho_3 > 0$}, Archivum Math. (Brno) {\bf 32} (1996), No. 2, 137-145.  
 \bibitem{g} 
{\sc O. Kowalski and M. Sekizawa}, {\em Pseudo-symmetric spaces of constant type in dimension three-elliptic spaces}, Rendiconti Mat. Appl. {\bf 7} (1997), No. 3, 477-512. 
\bibitem{f} 
{\sc O. Kowalski and M. Sekizawa}, {\em Pseudo-symmetric spaces of constant type in dimension three-non-elliptic spaces}, Bull. Tokyo Gakugei Univ. {\bf 4} (1998), No. 50, 1-28. 
\bibitem{m} 
{\sc H. Levy}, {\em Tensors determined by a hypersurface in Riemannian space}, Trans. Amer. Math. Soc. {\bf 28} (1926), 671-694.
\bibitem{74}  
{\sc A. Naber}, {\em Noncompact shrinking four solitons with nonnegative curvature}, J. Reine Angew. Math. {\bf 645} (2010),  125-153. 
\bibitem{72}  
{\sc  L. Ni and N. Wallach}, {\em On a classification of gradient shrinking solitons}, Math. Res. Lett.  {\bf 15} (2008), No. 5, 941-955.
\bibitem{71} 
{\sc G. Perelman}, {\em Ricci flow with surgery on three-manifolds}, prerpint, (2003). arXiv:math./0303109.
\bibitem{64} 
{\sc  P. Petersen and W. Wylie}, {\em Rigidity of gradient Ricci solitons}, Pacific J. Math. {\bf 241} (2009), No. 2,  329-345. 
\bibitem{69} 
{\sc P. Petersen and W. Wylie}, {\em On the classification of gradient Ricci solitons}, Geom. Topol. {\bf 14} (2010), No. 4,  2277-2300.
 \bibitem{40} 
{\sc K. Sekigawa}, {\em On some $3$-dimensional complete Riemannian manifolds satisfying $R(X, Y ). R = 0$}, T$\hat{o}$hoku Math. J.  {\bf 27} (1975), No. 4, 561-568.
\bibitem{27}
{\sc Z. I. Szab$\acute{o}$}, {\em Structure theorems on Riemannian manifolds satisfying $R(X,Y).R=0$}, I, the local version, J . Diff. Geom., {\bf 17} (1982), 531-582.
\bibitem{13}
{\sc Z. I. Szab$\acute{o}$}, {\em Structure theorems on Riemannian manifolds satisfying $R(X,Y).R=0$}, II. Global version, Geom. Dedicata. {\bf 19} (1985), 65-108.
\end{thebibliography}
\end{document}